\theoremstyle{plain} \newtheorem{theorem}{Theorem}
\newtheorem{lemma}[theorem]{Lemma}
\newtheorem{proposition}[theorem]{Proposition}
\newtheorem{claim}{Claim}
\newtheorem{remark1}[theorem]{Remark}
\newenvironment{remark}{\begin{remark1}\rm}{\end{remark1}}
\newcommand{\var}{\operatorname{var}}
\long\def\BEGINCOMMENT #1\ENDCOMMENT{\relax}
\newcommand{\maps}\longrightarrow
\newcommand{\cmaps}\Longrightarrow
\newcommand{\vocab}{\tau}
\newcommand{\best}{\operatorname{{\bf B}}}
\newcommand{\aest}{\operatorname{{\bf A}}}
\newcommand{\kest}{\operatorname{{\bf K}}}
\newcommand{\CSP}[1]{\mbox{\rm CSP$(#1)$}}
\newcommand{\NP}{\mbox{\bf NP}}
\newcommand{\NL}{\mbox{\bf NL}}
\newcommand{\algB}{\operatorname{{\mathcal B}}}
\newcommand{\algA}{\operatorname{{\mathcal A}}}
\newcommand{\algF}{\operatorname{{\mathcal F}}}
\def\zd{,\ldots,}
\newcommand{\Pol}{\operatorname{Pol}}
\begin{document}

\title{ON ALGEBRAS WITH MANY SYMMETRIC OPERATIONS}

%
%

\author{Catarina Carvalho}
\address{University of Hertfordshire,
Hatfield, AL10  9AB, UK}
\email{c.carvalho2@herts.ac.uk}

\author{Andrei Krokhin}
\address{ Durham University,
Durham, DH1 3LE, UK}
\email{andrei.krokhin@durham.ac.uk}

\maketitle

\begin{abstract}
 An $n$-ary operation $f$  is called
 {\it symmetric} if,  for all permutations  $\pi$ of $\{1, \ldots, n\}$, it satisfies the identity $f(x_1, x_2, \ldots, x_n)=f(x_{\pi(1)}, x_{\pi(2)}, \ldots, x_{\pi(n)}).$ 
 We show that, for each finite algebra $\algA$, either it has symmetric term operations of all arities
or else some finite algebra in the variety generated by $\algA$ has two automorphisms without a common fixed point.
We also show this two-automorphism condition cannot be replaced by a single fixed-point-free automorphism.
\end{abstract}

\section{Introduction}

The study of algebras  with particular types of term operations has always been a subject of interest in the field of Universal Algebra~\cite{burrissanka,hobby-mck}, and it has boomed  since its connections with the complexity of Constraint Satisfaction Problems (CSPs) was discovered (see, e.g.~\cite{bulatovjeavonskrokhin,bulatovvaleriote}). Many complexity classification results for the CSP are based on algebraic dichotomies (of independent interest) of the form: either an algebra $\algA$ has term operations satisfying certain ``nice'' identities or else some finite algebra in the variety generated by $\algA$ has some ``bad'' compatible relational structure, often of a simple form (see~\cite{bartoabsorbing,barto:bw,bulatovjeavonskrokhin,bulatovvaleriote,Larose07:Universal}). Such structures are the forbidden structures for these ``nice'' term operations. We give several examples of such algebraic dichotomies in Section~\ref{sec3}.

In this paper, we identify the forbidden structures for having {\em symmetric} operations (of all arities) as term operations. A symmetric operation is an operation that is invariant under any permutation of arguments.
Such operations have recently been used in the algebraic approach to the CSP~\cite{kun},
they characterise the CSPs solvable by a natural algorithm based on linear programming. One intended use of our forbidden structures  is in proofs of computational hardness results, such as non-existence of certain robust algorithms \cite{dalmau:robust, kun}, for CSPs that cannot be solved by linear programming.

\section{Definitions}\label{pre}

The definitions given in this section are standard.
A {\em vocabulary} $\tau$ is a finite set of relation symbols $R_1, \ldots, R_k$ or arities $r_1, \ldots, r_k\ge 1$.
A $\vocab$-structure $\best$ consists of
a finite set $B$, called the {\em universe} of $\best$, and a relation
$R^{\best}\subseteq B^r$ for every relation symbol $R\in\vocab$
where $r$ is the arity of $R$.

A {\em homomorphism} from a $\vocab$-structure $\aest$ to a $\vocab$-structure
$\best$ is a mapping $h:A\rightarrow B$ such that for every $r$-ary $R\in\vocab$ and
every $(a_1,\dots,a_r)\in R^{\aest}$, we  have $(h(a_1),\dots,h(a_r))\in R^{\best}$.
We  write $\aest\rightarrow\best$ if there is a
homomorphism from $\aest$ to $\best$.

The {\em constraint satisfaction (or homomorphism) problem} for a structure $\best$ is testing whether
a given structure $\aest$ admits a homomorphism to $\best$. This problem is denoted by $\CSP\best$, and can be identified
with the class of all structures $\aest$ such that $\aest\rightarrow\best$.

Let $f$ be an $n$-ary operation on $B$, and $R$ a relation of $\best$. We say that $f$
is a {\it polymorphism} of $R$ if, for any tuples, $\bar{a}_1, \ldots, \bar{a}_n\in
R$, the tuple obtained by applying $f$ componentwise to $\bar{a}_1, \ldots,
\bar{a}_n$ also belongs to $R$. In this case, $R$ is said to be {\em invariant} under $f$, or {\em compatible} with $f$.
Furthermore, $f$ is a {\it polymorphism of $\mathbf{B}$} if it is a
polymorphism of each relation in  $\mathbf{B}$. It is easy to
check that the $n$-ary polymorphisms of $\best$ are precisely the
homomorphisms from the $n$-th direct power $\best^n$ to $\best$. We denote by $\Pol(\best)$ the set of all polymorphisms of $\mathbf{B}$.

A \emph{finite algebra} is a pair  $\algA=(A, F)$ where $A$ is a finite set and $F$ is a family of operations of finite arity on $A$. The {\em term operations} of $\algA$ are the operations obtained from $F$ and the projections by superposition.
A \emph{variety} is a class of (indexed) algebras closed under taking homomorphic images, subalgebras, and direct products. The
\emph{variety generated by} $\algA$, $\var( \algA)$, consists of all homomorphic images of subalgebras of direct powers of $\algA$. As usual, $HS(\algA)$ denotes the class of all homomorphic images of subalgebras of $\algA$.
From each relational structure $\best$, one can obtain an algebra $\algA_{\best}=(B, \Pol (\best))$, by taking as operations on the universe $B$ the polymorphism of all relations in $\best$. A structure $\best'$ with universe $A$ is said to be {\em compatible} with an algebra $\algA=(A,F)$ if every operation in $F$ is a polymorphism of $\best'$ (equivalently, each relation in $\best'$ is the universe of a subalgebra of the corresponding power of $\algA$).


The notion of a polymorphism plays the key role in the algebraic
approach to the CSP. The polymorphisms of a structure are
known to determine the complexity of $\CSP\best$ as well as
definability of (the complement of) $\CSP\best$ in various logics (see~\cite{surveydualities,Larose07:Universal}).

We now define several types of operations that will be used in this paper.

\begin{itemize}
\item
An $n$-ary operation $f$  is called {\it idempotent} if it satisfies the
identity $f(x, \ldots, x)=x$.
\item An $n$-ary operation $f$  is called  {\it cyclic} if it satisfies the identity $$f(x_1, x_2, \ldots, x_n)=f(x_2, x_3,\ldots, x_n, x_1);$$
\item An $n$-ary operation $f$  is called
 {\it symmetric} if  it satisfies the identity $$f(a_1, a_2, \ldots, a_n)=f(a_{\pi(1)}, a_{\pi(2)}, \ldots, a_{\pi(n)})$$ for all permutations $\pi$ of $\{1, \ldots, n\}$;
\item
An $n$-ary operation $f$ is called {\it totally symmetric} if $f(x_1, \ldots,
x_n)=f(y_1, \ldots, y_n)$ whenever $\{x_1, \ldots, x_n\}=\{y_1, \ldots, y_n\}$.
If, in addition, $f$ is idempotent then we say that it is a TSI operation.

\item An $n$-ary ($n\ge 3$) operation is called a {\em weak near-unanimity (WNU)}
operation if it is idempotent and it satisfies the identities \[f(y,x\zd x,x)=f(x,y\zd x,x)=\ldots
=f(x,x\zd x,y).\]

%

\item A {\em Mal'tsev} operation is a ternary operation $f$ satisfying
\[f(x,x,y)=f(y,x,x)=y.\]
\end{itemize}

More of the universal-algebraic background can be found in~\cite{burrissanka,hobby-mck}.

\section{Some algebraic dichotomies}\label{sec3}

We will now describe some known algebraic dichotomy results and indicate where they are used in the study of CSPs.
It is known~\cite{bulatovjeavonskrokhin} that it is enough to classify only problems $\CSP\best$ such that the corresponding algebra $\algA_{\best}$ is idempotent,
i.e. all of its operations are idempotent. This explains why most of the dichotomies concern only idempotent algebras.

\begin{enumerate}
\item[(i)]  For a finite idempotent algebra $\algA$, either $\algA$ has a cyclic operation of some arity (equivalently, $\var( \algA)$ satisfies a non-trivial Mal'tsev condition), or else
the ternary relation $\{(0,0,1),(0,1,0),(1,0,0)\}$ is compatible with some (2-element) algebra in $HS(\algA)$~\cite{bartoabsorbing}.

It is known that, for a structure $\best$, if the (idempotent) algebra $\algA_{\best}$ satisfies the latter
condition then $\CSP\best$ is $\NP$-complete~\cite{bulatovjeavonskrokhin}. The Algebraic Dichotomy Conjecture
states that if $\algA_{\best}$ satisfies the former condition then $\CSP\best$ is tractable~\cite{bartoabsorbing,bulatovjeavonskrokhin}.

\item[(ii)] For a finite idempotent algebra $\algA$, either $\algA$ has WNU operations of  almost all arities (equivalently, $\var( \algA)$ is congruence meet-semidistributive), or else
there exists an algebra $\algB$ in $HS(\algA)$ and an Abelian group structure on the base set of $\algB$  such that the relation $\{(x, y, z):x+y=z\}$ is compatible with $\algB$~\cite{Larose07:Universal,marotimckenzie}.

It is known that the former condition, for the algebra $\algA_{\best}$, implies that $\CSP\best$ is definable
in the logic programming language Datalog~\cite{barto:bw} (and also admits a robust algorithm~\cite{barto:robust}), while the latter condition, which intuitively
says that $\CSP\best$ can encode systems of linear equations, implies the absence of these nice properties~\cite{federvardi,dalmau:robust}.

\item[(iii)]  For a finite idempotent algebra $\algA$, either $\algA$ has ternary term operations from Theorem~9.11 of~\cite{hobby-mck} (equivalently, $\var( \algA)$ is congruence join-semidistributive), or else
    there exists an algebra $\algB$ in $HS(\algA)$ such that at least one of the relations $\{(x, y, z):x+y=z\}$ (as above) and $\{0,1\}^3\setminus \{(1,1,0)\}$ is compatible with $\algB$~\cite{Larose07:Universal}.

The former condition, for the algebra $\algA_{\best}$, is conjectured to imply that $\CSP\best$ is definable
in linear Datalog~\cite{Larose07:Universal} (which roughly means that $\CSP\best$ can be reduced to the Digraph Reachability problem) and belongs to the complexity class $\NL$, while the latter condition, which intuitively
says that $\CSP\best$ can encode systems of linear equations or Horn 3-Sat, implies non-definability in linear Datalog and non-membership in $\NL$ (modulo complexity-theoretic assumptions)~\cite{Larose07:Universal}.

\item[(iv)] For a finite idempotent algebra $\algA$, either $\algA$ has a Mal'tsev operation as a term operation (equivalently, $\var( \algA)$ is congruence permutable), or else some binary reflexive and non-symmetric relation
    is compatible with a finite algebra $\mathcal{V}(\algA)$~\cite{hagemannmitschke}.

The latter condition was used in~\cite{bulatovcounting} to prove hardness of the counting version of $\CSP\best$, and in~\cite{bulatovmarx} to prove hardness of a version of $\CSP\best$ with an additional global constraint.
\end{enumerate}

\section{Forbidden structures for many symmetric operations}

Since the presence of many symmetric operations plays a role in the study of CSPs, it is natural to try to find (simple enough)
forbidden structures for this algebraic condition.


For a permutation $\pi$ on $A$, let $\pi^\circ$ denote the graph of $\pi$, i.e. $\pi^\circ=\{(a,\pi(a))\mid a\in A\}$.
In the next two sections we will deal with graphs of permutations compatible with algebras.
Note that $\pi^\circ$ is compatible with an algebra $\algA$ if and only if $\pi$ is an automorphism of $\algA$.


The following is a slightly weakened Proposition 2.1 of~\cite{bartocyclic}.

\begin{proposition}\label{allcyclic}
Let $\algA$ be a finite algebra.
\begin{itemize}
\item Either $\algA$ has cyclic term operations of all arities,
\item or else
 there is a finite algebra $\algB$ in $\var(\algA)$ with a fixed-point-free automorphism.
\end{itemize}
\end{proposition}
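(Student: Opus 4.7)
The plan is to prove the contrapositive: assume $\algA$ has no cyclic term operation of some arity $n\geq 2$, and exhibit a finite $\algB\in\var(\algA)$ together with an explicit fixed-point-free automorphism. The key idea is to package all $n$-ary term operations of $\algA$ into a single algebra on which cyclic shift of variables becomes a literal algebra automorphism; a fixed point of this automorphism would then \emph{be} a cyclic $n$-ary term.

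Concretely, I would let $\algF$ be the subalgebra of $\algA^{A^n}$ generated by the $n$ coordinate projections $\pi_1,\ldots,\pi_n\colon A^n\to A$. As a subalgebra of a finite direct power of $\algA$, the algebra $\algF$ is finite and lies in $\var(\algA)$, and its elements are precisely the $n$-ary term operations of $\algA$. Now consider the cyclic shift $\sigma\colon A^n\to A^n$ given by $\sigma(a_1,\ldots,a_n)=(a_2,\ldots,a_n,a_1)$, and define $\tau\colon\algA^{A^n}\to\algA^{A^n}$ by $\tau(f)=f\circ\sigma$. Since all operations on $\algA^{A^n}$ are applied coordinatewise, precomposition with the set map $\sigma$ commutes with them, so $\tau$ is an algebra homomorphism; it is bijective of order $n$ and sends $\pi_i$ to $\pi_{i+1\bmod n}$, hence it restricts to an automorphism of $\algF$.

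Finally, $t\in\algF$ is fixed by $\tau$ exactly when $t(a_1,\ldots,a_n)=t(a_2,\ldots,a_n,a_1)$ for all $a_i\in A$, i.e.\ exactly when the term operation $t$ is cyclic. By our standing assumption no such $t$ exists, so $\tau$ is fixed-point-free on $\algF$, and $\algB=\algF$ with automorphism $\tau$ proves the conclusion. I do not expect any real obstacle: the whole argument is a standard ``relatively free algebra / clone'' trick, and the only step that deserves any care is the routine verification that precomposition with a permutation of the index set is an algebra endomorphism of a direct power, which follows immediately from the coordinatewise definition of the operations on $\algA^{A^n}$.
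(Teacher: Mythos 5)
Your proof is correct and is essentially the standard argument here: the subalgebra of $\algA^{A^n}$ generated by the projections is the free $n$-generated algebra in $\var(\algA)$, and precomposition with the cyclic shift is exactly the automorphism (of that free algebra) whose graph the paper manipulates, in two-permutation form, in its proof of Theorem~\ref{2cycles}. The paper itself only cites Proposition~2.1 of Barto et al.\ for this particular statement, but the cited proof is the same free-algebra/fixed-point argument you give, so there is nothing to add.
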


Since any symmetric operation is cyclic, the latter condition in Proposition~\ref{allcyclic} is sufficient to forbid the existence of symmetric term operations of all arities. Could it also be necessary, at least for algebras of the form $\algA_{\best}$? We will show that it is not, but a small variation of it is such a condition.

\begin{theorem}\label{2cycles}
Let $\algA$ be a finite algebra.
\begin{itemize}
\item Either $\algA$ has symmetric term operations of all arities,
\item or else
 there is a finite algebra $\algB$ in $\var(\algA)$ that has two automorphisms without a common fixed point.
  Furthermore,  one of the automorphisms can be chosen to have order two.
\end{itemize}
\end{theorem}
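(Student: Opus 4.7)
The plan is to work with the free algebra $F := F_{\var(\algA)}(n)$ of $\var(\algA)$ on $n$ generators $x_1,\ldots,x_n$, where $n$ is chosen to be any arity at which $\algA$ fails to have a symmetric term operation (necessarily $n \ge 2$, since unary operations are trivially symmetric). The algebra $F$ is finite and embeds into $\algA^{A^n}$ as the subalgebra generated by the coordinate projections, so $F\in\var(\algA)$; by the universal property, every permutation $\pi\in S_n$ extends to an automorphism $\hat\pi$ of $F$ sending $x_i\mapsto x_{\pi(i)}$, giving a faithful action of $S_n$ on $F$ by automorphisms.

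The core observation driving the proof is that $\algA$ has an $n$-ary symmetric term operation if and only if the $S_n$-action on $F$ has a common fixed point. Any element $e\in F$ is of the form $t^F(x_1,\ldots,x_n)$ for some $n$-ary term $t$, and since $\hat\pi$ is a homomorphism, $\hat\pi(e)=t^F(x_{\pi(1)},\ldots,x_{\pi(n)})$. Because equations holding in $F$ are precisely the identities valid throughout $\var(\algA)$, the element $e$ is fixed by every $\hat\pi$ exactly when $t$ satisfies the identity $t(x_1,\ldots,x_n)=t(x_{\pi(1)},\ldots,x_{\pi(n)})$ for every $\pi\in S_n$ on $\algA$, that is, when $t$ is a symmetric term operation.

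I now take $\algB := F$. By assumption no element of $F$ is fixed by all of $S_n$. For $n\ge 3$, use that $S_n$ is generated by the transposition $\tau=(1\,2)$ and the $n$-cycle $\gamma=(1\,2\cdots n)$; take the two required automorphisms to be $\hat\tau$ (of order two, since $|A|\ge 2$ forces $\hat\tau\ne\id$) and $\hat\gamma$. A common fixed point would be fixed by the entire subgroup $\langle\hat\tau,\hat\gamma\rangle\le\operatorname{Aut}(F)$, which realises all of $S_n$, contradicting the absence of a common $S_n$-fixed point. For the boundary case $n=2$, the single automorphism $\hat\tau$ is already fixed-point-free and of order two; one may pair it with the identity automorphism.

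The argument is rather short once the free-algebra set-up is in hand. The only substantive step is the equivalence between $S_n$-fixed points in $F$ and symmetric $n$-ary term operations of $\algA$, and this is an immediate consequence of the universal mapping property of $F$. Beyond this routine translation I foresee no genuine obstacle; the extra flexibility of being allowed two automorphisms — one picked as a transposition, the other as an $n$-cycle — is precisely what makes this easier than Proposition~\ref{allcyclic}, where a single fixed-point-free automorphism must be produced and the analogous free-algebra shortcut is blocked by elements that happen to be fixed by the cyclic shift but not by the full symmetric group.
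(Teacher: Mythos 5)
Your proof is correct and is essentially the paper's own argument: the paper also works in the free $n$-generated algebra of $\var(\algA)$ and uses the two automorphisms induced by the transposition $(1\,2)$ and the $n$-cycle, presenting their graphs as subalgebras of $\algF\times\algF$ and deriving a symmetric term from a common fixed point. The only differences are cosmetic (graphs of automorphisms versus the automorphisms themselves, and your harmless special-casing of $n=2$, which the paper's uniform construction handles automatically).
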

\begin{proof}
It is easy to see that if $f$ is an $n$-ary symmetric term operation of $\algA$, and hence of every algebra in $\var( \algA)$, then, for any algebra $\algB$ in $\var( \algA)$ with universe $\{b_1,\ldots,b_n\}$, the element $f(b_1,\ldots,b_n)$ is a fixed point of every automorphism of $\algB$.

Assume now that $\algA$ does not have a symmetric operation of arity $n$.
Let $\algF$ be the free $n$-generated algebra in the variety $\var( \algA)$, with free generators $x_1, x_2, \ldots, x_n$. Let $\algA_1$ and $\algA_2$ be the subalgebras of $\algF\times \algF$ generated by the tuples
$$\algA_1=\langle \  (x_1, x_2), (x_2, x_1), (x_3, x_3), \ldots, (x_n, x_n)\ \rangle$$
$$\algA_2=\langle \ (x_1, x_2), (x_2, x_3), (x_3, x_4), \ldots, (x_{n-1}, x_n), (x_n, x_1) \ \rangle.$$
Since $x_1, \ldots, x_n$ are the free generators of $\algF$, the universes of $\algA_1$ and $\algA_2$ can be thought of as graphs of permutations on the universe of $\algF$ (and hence correspond to automorphisms of $\algF$).
The automorphism corresponding to $\algA_1$ has order two.
If these permutations share a fixed point then
there exist $n$-ary operations $f_1$ and $f_2$ and an element $a$ in $\algF$ such that
$$\begin{array}{l}f_1((x_1, x_2),(x_2, x_1), (x_3, x_3), \ldots, (x_n, x_n))= \\
f_2((x_1, x_2), (x_2, x_3),  \ldots, (x_{n-1}, x_n), (x_n, x_1) ) =(a,a).\end{array}$$
This implies that $f_1=f_2$ and, moreover, $f_1(x_1, x_2, x_3, \ldots, x_n)=f_1(x_2, x_1, x_3, x_4, \ldots, x_n)=f_1(x_2, x_3, \ldots, x_n, x_1)$, and so $f_1$ is symmetric. Hence we have an $n$-ary symmetric operation in $\algA$, a contradiction.
\end{proof}

The classes of algebras appearing in Proposition~\ref{allcyclic} and Theorem~\ref{2cycles} are different, as our next result shows.
Hence, graphs of fixed-point-free permutations do not form a complete set of forbidden structures for the existence of symmetric term operations of all arities.

Let $\kest=(K; R, S)$ be the structure with domain
 $$K=\{0, 1,2, \ldots,9, 10, \overline{01}, \overline{02}, \overline{03}, \overline{04}, \overline{12}, \overline{13}, \overline{14}, \overline{23}, \overline{24}, \overline{34}\},$$
 and binary relations $R$ and $S$ that are graphs of the following permutations $r$ and $s$, respectively,
$$r=(0 \ 1\  2) (5 \ 6 \ 7)(8\  9\  10) (\overline{12}\  \overline{02}\  \overline{01})(\overline{04}\ \overline{14}\ \overline{24})(\overline{13}\  \overline{23}\ \overline{ 03}),$$
$$s=(1 \ 4)(2\  3) (5\  6)(7\  8) (\overline{34}\ \overline{12})(\overline{02}\  \overline{03})(\overline{01}\  \overline{04})(\overline{24}\ \overline{13}).$$
It will be often convenient to think of $\kest$ as of two graphs as depicted in Figure~\ref{graph}, one directed, $R$ (represented by the solid lines), and one undirected, $S$ (represented by the dotted lines),  on the same set of vertices $K$. Then fixed points of permutations correspond to loops in the graphs.
To simplify notation, for elements of the form  $\overline{xy}$ we assume the convention that $\overline{xy}=\overline{yx}$.

\begin{figure}[h]
 \centering
    \includegraphics[scale=0.65]{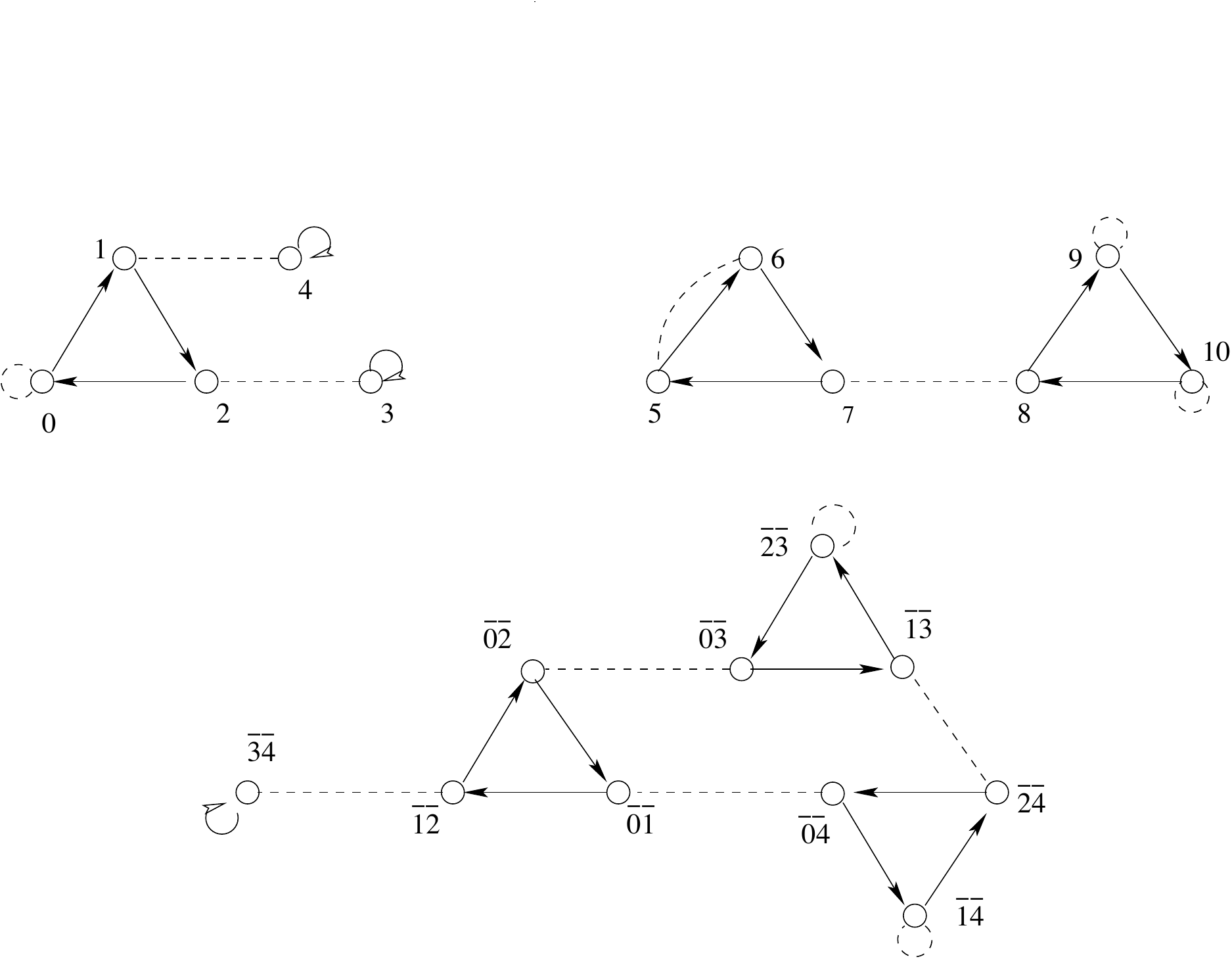}
     \caption{Relational structure $\kest$}
 \label{graph}
\end{figure}

\bigskip

\begin{theorem}\label{cic}
The structure $\kest$ has cyclic polymorphisms of all arities, but no symmetric polymorphism of arity 5.
\end{theorem}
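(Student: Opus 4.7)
The two claims are handled separately.

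\emph{Non-existence of a 5-ary symmetric polymorphism.} Suppose $f$ were such. Observe that both $r$ and $s$ setwise stabilize $\{0,1,2,3,4\}$: indeed $r$ restricts to the 3-cycle $(0\,1\,2)$ (fixing $3,4$) and $s$ restricts to $(1\,4)(2\,3)$ (fixing $0$). Then $v:=f(0,1,2,3,4)$ satisfies
$r(v)=f(r(0),\dots,r(4))=f(1,2,0,3,4)=v$ by symmetry of $f$, and likewise $s(v)=f(0,4,3,2,1)=v$. But $r$ fixes only $\{3,4,\overline{34}\}$ while $s$ fixes only $\{0,9,10,\overline{14},\overline{23}\}$, and these sets are disjoint in $K$---contradiction.

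\emph{Cyclic polymorphisms of all arities.} First identify $\langle r,s\rangle\cong A_5$: the elements $r, s, rs$ have orders $3, 2, 5$ respectively (the last since $rs$ acts as the 5-cycle $(0\,1\,4\,2\,3)$ on $\{0,\ldots,4\}$), matching the classical presentation of $A_5$. The group $\langle r,s\rangle$ acts on $K$ with three orbits: $\{0,\ldots,4\}$ (natural action), $\{\overline{ij}\}$ (action on 2-subsets), and $\{5,\ldots,10\}$ (the exceptional transitive $A_5$-action of degree $6$), with point-stabilizers isomorphic to $A_4$, $S_3$, $D_5$ respectively. An inspection of the subgroup lattice of $A_5$ shows every proper subgroup is conjugate to a subgroup of at least one of $A_4$, $S_3$, $D_5$; equivalently, every proper subgroup of $A_5$ has a common fixed point in $K$.

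Given this, construct an $n$-ary cyclic polymorphism $f$ by equivariance. Let $G:=\mathbb{Z}/n\mathbb{Z}\times A_5$ act on $K^n$ by cyclic shift times diagonal $A_5$-action. For each $G$-orbit fix a representative $\bar x$, let $H_{\bar x}$ be the image of $\operatorname{Stab}_G(\bar x)$ in $A_5$ under projection to the second factor, choose any $v_{\bar x}\in K$ fixed by $H_{\bar x}$, and define $f((k,g)\cdot\bar x):=g(v_{\bar x})$. Then $f$ is well-defined (since $v_{\bar x}$ is $H_{\bar x}$-fixed), $A_5$-equivariant (hence a polymorphism of $\kest$), and invariant under cyclic shifts (hence cyclic). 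The construction succeeds provided every $H_{\bar x}$ is a proper subgroup of $A_5$.

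The main obstacle is verifying $H_{\bar x}\ne A_5$ for every tuple $\bar x$. If $H_{\bar x}=A_5$, then the $A_5$-orbit of $\bar x$ in $K^n$ is contained in, and hence coincides with, its cyclic orbit of some size $n'$; so $A_5$ acts transitively on $n'$ elements through a cyclic element acting transitively. Comparing the degrees of transitive $A_5$-actions ($1,5,6,10,12,\ldots$) with the element orders in $A_5$ ($1,2,3,5$) forces $n'\in\{1,5\}$. For $n'=5$, the stabilizer $\operatorname{Stab}_{A_5}(\bar x)$ must be conjugate to $A_4$, which fixes only a single vertex of $K$; hence $\bar x$ is constant at that vertex, but then its cyclic orbit has size $1$, contradicting $n'=5$. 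The case $n'=1$ fails similarly since any constant tuple has $A_5$-orbit of size $\ge 5$. Hence $H_{\bar x}$ is always proper and the construction succeeds.
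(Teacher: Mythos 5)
Your proof of the first half (no symmetric polymorphism of arity $5$) is exactly the paper's argument: by symmetry $f(0,1,2,3,4)$ would have to be a common fixed point of $r$ and $s$, and the fixed-point sets $\{3,4,\overline{34}\}$ and $\{0,9,10,\overline{14},\overline{23}\}$ are disjoint.

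Your second half takes a genuinely different route. The paper first reduces to prime arities $p\le 19$ via the Barto--Kozik results on cyclic terms and then spends several pages constructing cyclic polymorphisms explicitly, block by block, with extensive case analysis. You instead observe that a map $K^n\to K$ is a polymorphism of $\kest$ precisely when it is equivariant for $\langle r,s\rangle\cong A_5$ (acting on $K$ as the disjoint union of the natural action, the action on $2$-subsets, and the exceptional degree-$6$ action), and that cyclicity is invariance under the commuting shift. Existence of the required operation then reduces to the group-theoretic fact that every proper subgroup of $A_5$ fixes a point of $K$, since the three point stabilizers $A_4$, $S_3$, $D_5$ are exactly the maximal subgroups up to conjugacy. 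All of these structural claims check out (in particular $(rs)^5=1$ holds on every block, so $\langle r,s\rangle$ really is a quotient of $A_5$ and hence $A_5$ itself). This argument is much shorter, treats all arities uniformly without the prime-arity reduction, and explains \emph{why} the example works.

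There is, however, one unjustified step in your final verification that $H_{\bar x}\ne A_5$. From $A_5\cdot\bar x\subseteq\langle\sigma\rangle\cdot\bar x$ you conclude that the two orbits ``hence coincide''; containment does not give this, since the shift orbit is $A_5$-invariant and a priori splits into several equal-size $A_5$-orbits of which $A_5\cdot\bar x$ is only one. Your subsequent comparison of degrees of transitive $A_5$-actions with element orders also conflates the external shift $\sigma$ with an element of $A_5$. Both issues are repaired by a one-line argument: if $H_{\bar x}=A_5$, then for each $g\in A_5$ there is a unique $k$ modulo $n'$ (the least period of $\bar x$) with $g\cdot\bar x=\sigma^{k}\bar x$, and $g\mapsto k$ is a homomorphism $A_5\to\mathbb{Z}/n'\mathbb{Z}$ because $\sigma$ commutes with the diagonal action; since $A_5$ is perfect this homomorphism is trivial, so $\bar x$ is fixed by all of $A_5$ and each coordinate of $\bar x$ would be a common fixed point of $r$ and $s$, which do not exist. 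With that patch your proof is complete.
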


\begin{proof}
 For a contradiction, suppose that $f$ is a 5-ary symmetric operation that preserves both $R$ and $S$. Since $f$ is symmetric we know that $f(0,1,2,3,4)=f(1,2,0,3,4)=f(0,4,3,2,1)$. We have that  $(0,1),(1,2),(2,0),(3,3),(4,4)\in R$ and $(0,0), (1,4),(2,3),(3,2),(4,1)\in S$. It follows that $f(0,1,2,3,4)$ is a common loop in $R$ and $S$, which does not exist, a contradiction.
The proof that $\kest$ has cyclic polymorphisms of all arities occupies the next section.
\end{proof}

We used 
computer-assisted search in the process of finding the structure $K$, but 
the search was not designed to find the smallest structure with required 
properties, so we do not know whether $K$ is smallest.
We do know that this example of structure  is tight in the sense that the existence of  cyclic operations of all arities imply the existence of symmetric operations of arities up to 4. 

\begin{lemma}\label{s4}
If an algebra $\algA$ has cyclic  term operations of arities 2 and 3 then it also has   symmetric term operations of arities up to  4.
\end{lemma}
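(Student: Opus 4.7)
The plan is to build the desired symmetric terms one arity at a time, using $c_2$ (a symmetric binary term, since ``cyclic'' and ``symmetric'' coincide at arity $2$) as a universal averaging tool and exploiting the action of $S_n$ on natural combinatorial objects.

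First I would dispatch arity $2$ for free, since a cyclic binary operation already satisfies $c_2(x,y)=c_2(y,x)$. For arity $3$, let $c_3$ be the given cyclic ternary term. Under the cyclic group $A_3\le S_3$, the six orderings of $\{x,y,z\}$ collapse to just two values, namely $a=c_3(x,y,z)=c_3(y,z,x)=c_3(z,x,y)$ and $b=c_3(x,z,y)=c_3(z,y,x)=c_3(y,x,z)$. I would then define
\[
s_3(x,y,z)\;=\;c_2\bigl(c_3(x,y,z),\,c_3(x,z,y)\bigr).
\]
Because $c_2$ is symmetric, $s_3$ returns $c_2(a,b)$ regardless of which of the six permutations is applied to $(x,y,z)$, so $s_3$ is a symmetric ternary term of $\algA$.

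For arity $4$ the clever step is to notice that $S_4$ acts on the three perfect matchings of $\{x,y,z,w\}$, namely $M_1=\{\{x,y\},\{z,w\}\}$, $M_2=\{\{x,z\},\{y,w\}\}$, $M_3=\{\{x,w\},\{y,z\}\}$, and this action is exactly the quotient of $S_4$ by the Klein four-group, so every $\pi\in S_4$ induces some element of $S_3$ on $(M_1,M_2,M_3)$. Using the symmetry of $c_2$, each matching $M_i$ gives rise to a term $p_i$ depending only on $M_i$:
\[
p_1=c_2\bigl(c_2(x,y),c_2(z,w)\bigr),\quad p_2=c_2\bigl(c_2(x,z),c_2(y,w)\bigr),\quad p_3=c_2\bigl(c_2(x,w),c_2(y,z)\bigr).
\]
I would then set $s_4(x,y,z,w)=s_3(p_1,p_2,p_3)$. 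Any $\pi\in S_4$ permutes the matchings and hence permutes $(p_1,p_2,p_3)$; symmetry of $s_3$ then gives $s_4(\pi x,\pi y,\pi z,\pi w)=s_4(x,y,z,w)$, so $s_4$ is the required symmetric $4$-ary term.

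Each step is a term in $c_2$ and $c_3$, so all three operations are term operations of $\algA$. The main conceptual point (and essentially the only nontrivial one) is identifying the right $S_4$-orbit structure for arity $4$; the three-matching trick makes the outer symmetrisation a \emph{ternary} symmetrisation, which is exactly what we have just built. I do not see any obstacle beyond choosing this combinatorial representation; the verifications are routine orbit-counting using only the symmetry of $c_2$ and the already-established symmetry of $s_3$.
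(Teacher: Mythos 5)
Your proof is correct and follows essentially the same route as the paper: symmetrise $c_3$ with $c_2$ to get $s_3$, then apply $s_3$ to the three terms $c_2(c_2(\cdot,\cdot),c_2(\cdot,\cdot))$ associated with the three perfect matchings of the four variables. The paper writes these three terms as $t(x,y,z,w)$, $t(x,w,y,z)$, $t(x,z,y,w)$ and verifies the relevant invariances by listing identities rather than by invoking the $S_4\to S_3$ action on matchings, but the construction is the same.
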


\begin{proof}
Let $s_2, c_3$ be cyclic operations of arities 2 and 3 respectively. Clearly $s_2$ is symmetric, and it is easy to check that the operation
$$s_3(x,y,z)=s_2(c_3(x,y,z), c_3(y,x,z))$$
is a 3-ary symmetric operation.

Now, the 4-ary operation
$t(x,y,z,w)=s_2(s_2(x,y),s_2(z,w))$ satisfies the following identities
$$\begin{array}{l}
t(x,y,z,w)=t(y,x,z,w)=t(x, y, w, z)=t(y, x, w, z)\\
= t(z,w,x,y)=t(z,w,y,x)=t(w,z,x,y)=t(w,z,y,x).\end{array}$$
It then follows that the operation
$$s_4(x,y,z,w)= s_3(t(x,y,z,w), t(x,w,y,z), t(x,z,y,w))$$
is  symmetric.
\end{proof}

\begin{remark}
The condition of having totally symmetric operations of all arities has also played a role in the study of the CSP.
Such operations characterise the so-called CSPs of width 1, i.e. CSPs solvable by the arc-consistency algorithm~\cite{dalmauwidth1,federvardi}.
It was claimed in~\cite{kun} that this condition is equivalent to the one of having many symmetric operations, but a flaw was discovered in the proof (as acknowledged on R.~O'Donnell's webpage), and a counter-example to the claim was recently found by G.~Kun~\cite[Example 99]{kun-example}: a very simple structure that
has symmetric polymorphisms of all arities, but no ternary totally symmetric polymorphism.
\end{remark}

\section{Proof of Theorem~\ref{cic}}

We make use of  two results that have been proved for algebras, that  can naturally be applied to relational structures.

\begin{proposition}\cite[Proposition 2.2]{bartocyclic}
For a finite algebra $\mathcal{A}$ the following hold:
\begin{enumerate}
\item If  $\mathcal{A}$ has an $n$-ary cyclic term then it has a $k$-ary cyclic term for all $k>1$ divisor of $n$.
\item If $\mathcal{A}$ has an $n$-ary and an $m$-ary cyclic term, then it also has an $mn$-ary cyclic term.
\end{enumerate}\label{cyclicprime}
\end{proposition}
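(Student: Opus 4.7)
For part (1), my plan is straightforward variable identification. Writing $n = km$ with $k > 1$ a divisor of $n$, and letting $c_n$ be the given $n$-ary cyclic term, I would define
$$c_k(y_1, \ldots, y_k) \;=\; c_n(\underbrace{y_1, \ldots, y_1}_{m}, \underbrace{y_2, \ldots, y_2}_{m}, \ldots, \underbrace{y_k, \ldots, y_k}_{m}).$$
Cyclically shifting $(y_1, \ldots, y_k)$ corresponds to applying the $n$-cycle to the argument list of $c_n$ exactly $m$ times, and $c_n$ absorbs this shift since any power of its basic cyclic symmetry is again a symmetry. So $c_k$ is cyclic and this part is routine.

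For part (2), the idea is a two-level composition of $c_m$ and $c_n$ with the input positions \emph{interleaved} so that the global cyclic shift on $mn$ positions aligns with both levels of cyclicity. Concretely, I would define
$$f(x_1, \ldots, x_{mn}) \;=\; c_n\bigl(c_m(x_1, x_{n+1}, \ldots, x_{(m-1)n+1}),\; c_m(x_2, x_{n+2}, \ldots, x_{(m-1)n+2}),\; \ldots,\; c_m(x_n, x_{2n}, \ldots, x_{mn})\bigr),$$
so that the $j$-th inner application of $c_m$ gathers the positions congruent to $j$ modulo $n$, in increasing order.

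To verify that $f$ is cyclic, I would apply the global shift $(x_1, \ldots, x_{mn}) \mapsto (x_2, \ldots, x_{mn}, x_1)$ and track its effect on each inner block. For $j < n$, the new contents of the $j$-th block are exactly the old contents of the $(j+1)$-th block. For $j = n$, the new contents of the $n$-th block are $(x_{n+1}, x_{2n+1}, \ldots, x_{(m-1)n+1}, x_1)$, which is the one-step cyclic shift of the old first block $(x_1, x_{n+1}, \ldots, x_{(m-1)n+1})$; by cyclicity of $c_m$, its $c_m$-value equals the old one. Hence the $n$-tuple of inner outputs undergoes a global cyclic shift by one, which $c_n$ absorbs by its own cyclicity, yielding $f(x_2, \ldots, x_{mn}, x_1) = f(x_1, \ldots, x_{mn})$.

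The one point I expect to need care is the choice of interleaving: a naive consecutive partition of the input into blocks $\{1, \ldots, m\}, \{m+1, \ldots, 2m\}, \ldots$ would fail, because the wrap-around of the global shift then mixes variables across two different blocks, and one only recovers a cyclic symmetry of order $\operatorname{lcm}(m,n)$, which is strictly smaller than $mn$ when $\gcd(m,n) > 1$. The interleaved layout above localises the wrap-around entirely within the final block, where it is swallowed by the inner cyclicity of $c_m$, so that no coprimality hypothesis on $m$ and $n$ is needed.
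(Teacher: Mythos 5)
Your proof is correct. Note that the paper does not actually prove Proposition~\ref{cyclicprime} --- it is quoted from the cited reference \cite{bartocyclic} --- so there is no in-paper argument to compare against; both of your constructions (variable identification in blocks of size $m$ for part (1), and the interleaved two-level composition for part (2)) are exactly the standard ones from that source, and your verification of the wrap-around in the last block is the key step done correctly. Your closing remark about the consecutive partition failing is also well taken: a consecutive block composition is only invariant under shifts by multiples of the inner arity, not under the unit shift, so the interleaving is genuinely needed for part (2).
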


\begin{proposition}\cite[Theorem  4.1]{bartoabsorbing} \label{cyclicbigarities}
Let $\algA$ be a finite algebra. The following are equivalent
\begin{itemize}
\item $\algA$ has a cyclic term;
\item $\algA$  has a cyclic term of arity $p$, for every prime $p > |A|$.
\end{itemize}
\end{proposition}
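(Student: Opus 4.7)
My plan is to reduce the claim ``$\kest$ has cyclic polymorphisms of all arities'' to a finite collection of explicit constructions, using Propositions~\ref{cyclicprime} and~\ref{cyclicbigarities}. The non-existence of a symmetric polymorphism of arity $5$ is already handled in the statement's proof above, so only the positive half remains.

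First, by Proposition~\ref{cyclicprime}(2) the set of arities admitting a cyclic polymorphism of $\kest$ is closed under multiplication, so it suffices to produce cyclic polymorphisms for every prime arity. Second, by Proposition~\ref{cyclicbigarities}, once $\kest$ has any cyclic polymorphism at all, it automatically has cyclic polymorphisms of arity $p$ for every prime $p > |K| = 21$. Together, these two facts reduce the whole task to exhibiting a cyclic polymorphism of $\kest$ of arity $p$ for each prime $p \in \{2,3,5,7,11,13,17,19\}$; every composite arity is then obtained by repeated application of Proposition~\ref{cyclicprime}(2).

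Each such construction amounts to specifying an operation $f_p \colon K^p \to K$ that (i) commutes componentwise with both $r$ and $s$---equivalently, preserves $R$ and $S$---and (ii) satisfies the cyclic identity $f_p(x_1,\ldots,x_p) = f_p(x_2,\ldots,x_p,x_1)$. I would proceed orbit-by-orbit: with $G = \langle r,s\rangle$ acting diagonally on $K^p$ and $C_p$ acting by cyclic shift of coordinates, $f_p$ is determined by choosing, for each $(C_p \times G)$-orbit of $K^p$, a compatible target $G$-orbit of $K$ together with a coherent choice of representative within it. The essential difference from the arity-$5$ symmetric case is that cyclic invariance is strictly weaker than symmetric invariance: the identifications $f(0,1,2,3,4) = f(1,2,0,3,4)$ and $f(0,1,2,3,4) = f(0,4,3,2,1)$, which jointly forced $f(0,1,2,3,4)$ to be a common loop of $R$ and $S$, no longer hold under mere cyclic invariance, since neither $(1,2,0,3,4)$ nor $(0,4,3,2,1)$ is a cyclic shift of $(0,1,2,3,4)$. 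So the obstruction that blocked symmetry of arity $5$ is invisible to cyclic identities.

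The main obstacle is the explicit case analysis for the larger primes $p \in \{7,11,13,17,19\}$, where the orbit count of $C_p \times G$ on $K^p$ grows rapidly. For $p = 2$ and $p = 3$ a commutative $G$-equivariant binary operation on $K$ and its natural ternary companion should suffice. For each of the remaining small primes I would record $f_p$ by giving a table of orbit representatives together with their assigned images, and then verify the cyclic identity and $R$-, $S$-preservation orbit by orbit. As the paper notes immediately after the statement, computer search was used in designing $\kest$; I expect the same search to supply the explicit assignments needed to certify each $f_p$, so that the proof in the next section proceeds by presenting these tables and checking the two conditions on a chosen set of representatives.
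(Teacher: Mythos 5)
Your proposal does not address the statement you were asked to prove. Proposition~\ref{cyclicbigarities} is a general theorem about an arbitrary finite algebra $\algA$: the equivalence between having \emph{some} cyclic term and having cyclic terms of \emph{every} prime arity $p>|A|$. The paper does not prove this; it imports it verbatim as Theorem~4.1 of Barto and Kozik, where the nontrivial implication (one cyclic term yields cyclic terms of all prime arities above $|A|$) is established via the theory of absorbing subalgebras and the Loop Lemma. Your text instead takes Proposition~\ref{cyclicbigarities} as a black box and uses it, together with Proposition~\ref{cyclicprime}, to outline a proof of Theorem~\ref{cic} (that the specific $21$-element structure $\kest$ has cyclic polymorphisms of all arities). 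Read against the actual target, this is circular: you cannot prove Proposition~\ref{cyclicbigarities} by invoking Proposition~\ref{cyclicbigarities}, and nothing in your argument engages with the hard direction of the equivalence --- no absorption, no Ramsey/loop argument, not even the easy observation of which direction is trivial.

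For what it is worth, the plan you describe is essentially the strategy the paper carries out in Section~5 for Theorem~\ref{cic}: reduce to prime arities below $|K|=21$ via Propositions~\ref{cyclicprime} and~\ref{cyclicbigarities}, then construct the cyclic operations explicitly (the paper does this block by block on $C_1,C_2,C_3$ and extends by induction, rather than orbit by orbit under $C_p\times\langle r,s\rangle$, and it deals with repeated arguments via Claim~\ref{4elements}). But even judged as a proof of Theorem~\ref{cic}, your write-up defers all of the actual constructions for $p\in\{2,3,5,7,11,13,17,19\}$ to unspecified computer-generated tables, so it is a plan rather than a proof. The essential gap remains that the proposition itself --- a statement about all finite algebras, not about $\kest$ --- is left entirely unproved.
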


It follows from Propositions~\ref{cyclicprime} and \ref{cyclicbigarities} that it is enough to show that $\kest$ has cyclic polymorphisms of arity $p$ for all prime $p<21$.
We will define partial cyclic operations of prime arities on $K$, and then show by induction that $\kest$ is preserved by cyclic operations of arities up to 21.

Consider the following partition of $K$: $C_1=\{0, \ldots, 4\}$, $C_2=\{5, \ldots10\}$,
and $C_3=\{\overline{01}, \ldots, \overline{34}\}$; blocks $C_1$ and $C_3$  are  depicted in Fig.~\ref{c2onC2} and  Fig.~\ref{c3onC2}, respectively. In these figures the filled lines represent the arcs of $R$ and the dotted lines the (undirected) edges of $S$.

We start by defining partial  cyclic  operations $c_p(x_1,\ldots, x_p)$ for all prime $p<21$ and $x_1, \ldots, x_p$ all belonging to the same block. These operations do not necessarily preserve the blocks but  preserve $R$ and $S$. Then, using these operations,  we show, by induction on $n$, that $R$ and $S$ are preserved by cyclic  operations of arity $n$, for all $n=2, \ldots, 21$. Recall that, for elements of the form  $\overline{xy}$, we assume the convention that $\overline{xy}=\overline{yx}$.



\begin{enumerate}
\item[(1)] {\it{Definition of $c_p(x_1, \ldots, x_p)$ with $x_1, \ldots, x_p$ all distinct and belonging to the same block:}}\\
We assume that $x_1, \ldots, x_p$ are all distinct, so when they belong to $C_1$ or $C_2$ we just need to define $c_p$ for $p\le 5$, and when they belong to $C_3$ we define $c_p$ for $p\le 7$.


We define the operation $c_2$ to act symmetrically on all blocks, i.e once we define it  on a tuple $(x,y)$ the definition is the same on the tuple $(y,x)$. 
 For distinct $x, y \in C_1$ we let $c_2(x, y)=\overline{xy}$; for distinct $x, y\in C_3$ we define $$c_2(x,y)=\left\{ \begin{array}{cl}
a& {\rm if}\ x=\overline{ab}, y=\overline{ac} \ { \rm for\  some} \ a\in C_1\\
e& {\rm if }\ x=\overline{ab}, y=\overline{cd}, \ { \rm and} \ C_1=\{a,b,c,d, e\};
\end{array}\right.$$
and for distinct $x,y \in C_2$ we define it as shown in Fig.~\ref{c2onC2}: $c_2(5,6)=c(7,8)=c_2(9,10)=0, \ c_2(5,7)=c_2(6,10)=c_2(8,9)=2$ and so on.
\begin{figure}[h]
 \centering
    \includegraphics[scale=0.65]{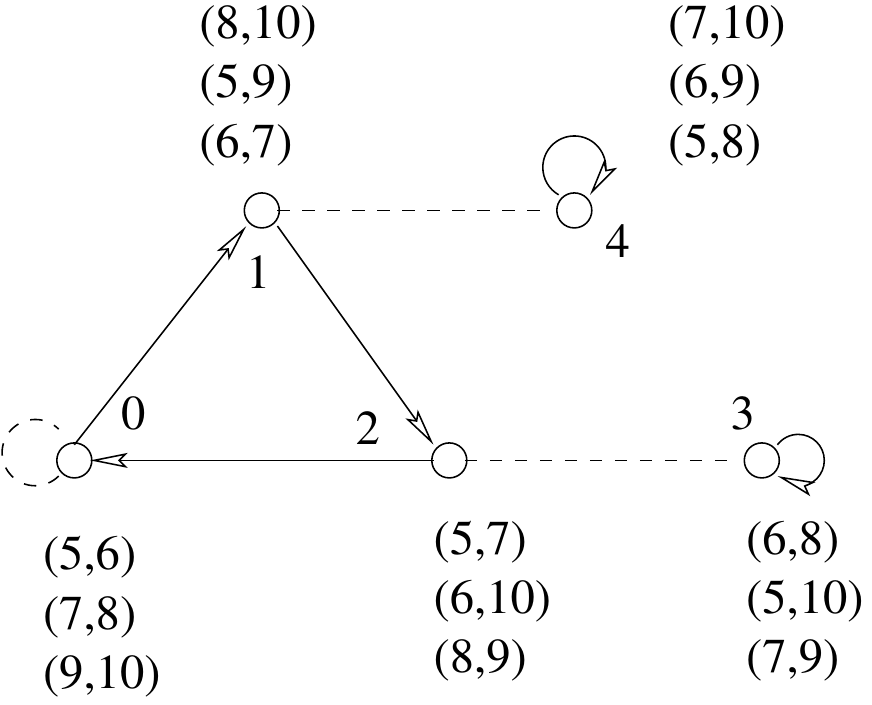}
     \caption{Operation $c_2$ maps  $C_2$ to $C_1$}
 \label{c2onC2}
\end{figure}

It is worth noting that operation $c_2$ and below  $c_3$ and $c_5$  are defined on the block $C_3$  by going  over all isomorphism 
types of graphs on $\{0,1,2,3,4\}$ that have $2,3$ and $5$ edges, respectively.

To check that $c_2$, as defined, preserves relations $R$ and $S$ we can start with any two elements of $C_1$ and follow their images in $C_3$, moving along elements connected first by $R$ and then  by $S$. Indeed component $C_3$ appeared by defining a binary symmetric relation on $C_1$. For elements of $C_2$ we can check that adjacency in both $R$ and $S$ is preserved  by $c_2$ by checking all mapping in Figure ~\ref{c2onC2}. Operation $c_2$ sends any two elements of $C_3$ to an element of $C_3$, we can also follow adjacency in block $C_3$ in Figure~\ref{graph};  e.g.  we have  for example $(\overline{01}, \overline{12}), (\overline{14}, \overline{24}) \in R$ and  $c_2( \overline{01}, \overline{14})= 1, c_2(\overline{12}, \overline{24})= 2$ and we can see that  $(1, 2)\in R$.

We define operation $c_3$  to also act symmetrically on all elements, i.e once we define it  on a tuple $(x,y,z)$  the operation takes the same value on any tuple  obtained by arbitrarily permuting   $x, y, z$. For distinct $x, y, z\in C_1$ we define $c_3(x, y, z)=c_2(u,v)$ where  $\{u, v\}=C_1\backslash \{x, y, z\}$;
when  $x, y,z\in C_3$ are all distinct, we let
$$c_3(x,y,z)=\left\{ \begin{array}{cl}
\overline{de} & {\rm if} \  \ x=\overline{ab},\  y=\overline{bc},\  z=\overline{ac},  \\
\overline{ae}  & {\rm if} \ \ x=\overline{ab},\  y=\overline{ac}, \ z=\overline{ad}, \\
a & {\rm if} \ \ x=\overline{ab},\  y=\overline{ac},\  z=\overline{de}, \\
e  & {\rm if}\  \ x=\overline{ab},\  y=\overline{bc},\  z=\overline{ad};
\end{array}\right.$$
and in $C_2$ we define $c_3$ as shown in Fig.~\ref{c3onC2}: $c_3(5,6,7)=c_3(8,9,10)=\overline{34}$, and so on.

\begin{figure}[h!]
 \centering
    \includegraphics[scale=0.65]{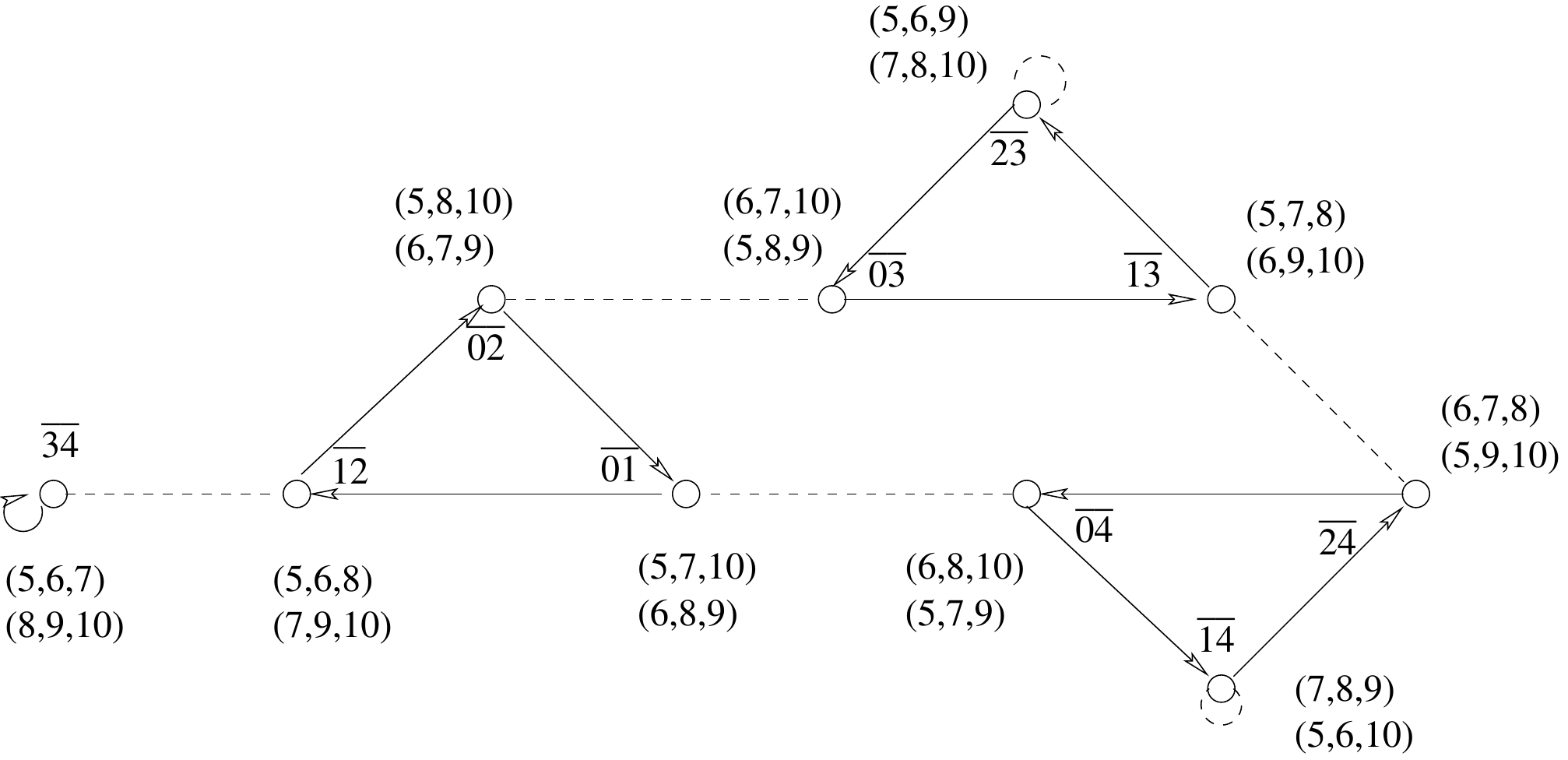}
     \caption{Operation $c_3$ maps $C_2$ to $C_3$}
 \label{c3onC2}
\end{figure}

To check that $c_3$, as defined, preserves relations $R$ and $S$  for any two elements of $C_1$ follows immediately from the fact that  $c_2$ preserves $R$ and $S$, for any two elements of $C_2$ we can follow the image of the elements in Fig.~\ref{c3onC2}; if we consider $c_3$ acting on elements of $C_3$ the idea is that triples of elements considered in each of the 4 cases of $c_3$ are connected, via $R$ and $S$, to triples considered in the same case type, and we can  then check that the relations are preserved by following the pictures of components $C_3$ and $C_1$, e.g. we have 
$$\begin{array}{l}  (\overline{03},\overline{13}), ( \overline{13}, \overline{23}), (\overline{01}, \overline{12})\in R \ {\rm and} \ (c_3( \overline{03}, \overline{13}, \overline{01}), c_3(\overline{13}, \overline{23}, \overline{12}))=(\overline{24}, \overline{04})\in R;\\
  (\overline{01},\overline{12}), ( \overline{02}, \overline{01}), (\overline{03}, \overline{13})\in R  \ {\rm and} \  (c_3( \overline{01}, \overline{02}, \overline{03}), c_3(\overline{12}, \overline{01}, \overline{13}))=(\overline{04}, \overline{14})\in R\\
  
    (\overline{01},\overline{04}), ( \overline{02}, \overline{03}), (\overline{34}, \overline{12})\in S  \ {\rm and} \  (c_3( \overline{01}, \overline{02}, \overline{34}), c_3(\overline{04}, \overline{03}, \overline{12}))=(0, 0)\in S\\
     (\overline{01},\overline{04}), ( \overline{12}, \overline{34}), (\overline{03}, \overline{02})\in S  \ {\rm and} \  (c_3( \overline{01}, \overline{12}, \overline{03}), c_3(\overline{04}, \overline{34}, \overline{02}))=(4, 1)\in S.
     \end{array}$$

We define the operation $c_5$ to be  cyclic in $C_1$ and symmetric on the remaining blocks, i.e once we define an operation on a tuple $(x_1, \ldots, x_5)$ it takes the same value 
on all tuples obtained from cycling permuting $x_1, \ldots, x_5$, and if $x_1, \ldots, x_5$ belong to $C_2$ or $C_3$  the operation also takes the same value on the  tuples obtained by arbitrarily permuting $x_1, \ldots, x_5$.  
When  $x, y, z,  u,v  \in C_2$ are all distinct, we define $c_5(x, y, z, u,v)=w$  where $C_2=\{x,y,z,u,v,w\}$; and for distinct  $x, y, z, u, v \in C_3$  we define
$$c_5(x,y,z,u,v)=\left\{ \begin{array}{cl}
a& {\rm if} \ x=\overline{ab},  y=\overline{ac},  z=\overline{ad},  u=\overline{ae},  v=\overline{ce}\\
a& {\rm if} \  x=\overline{ab},  y=\overline{cd},  z=\overline{eb},  u=\overline{bd},  v=\overline{ad}\\
e& {\rm if} \  x=\overline{ab},  y=\overline{cd},  z=\overline{cb},  u=\overline{bd},  v=\overline{ad}\\
e& {\rm if} \  x=\overline{ab},  y=\overline{cd},  z=\overline{cb},  u=\overline{bd},  v=\overline{ae}\\
c_5(a,b,c,d,e) &  {\rm if}  \ x=\overline{ab},  y=\overline{bc},  z=\overline{cd},  u=\overline{de},  v=\overline{ae}
\end{array}\right.$$
where $C_1=\{a, b, c, d, e\}$;  in $C_1$ we define
$$\begin{array}{c}
c_5(0,1,2,4,3)=5, \ \ \ c_5(0,4,3,1,2)=6, \ \ \ c_5(0,1,4,3,2)=7, \\ c_5(0,4,1,2,3)=8, \ \ \ c_5(0,2,4,1,3)=9, \ \ \ c_5(0,1,3,2,4)=10,\end{array}$$
and to extend  $c_5$ to the rest of  $C_1$, we think of the tuple  $(x,y,z,u,v)$ as the  permutation $(x  y z u v)$. It is then easy to see that $(xyzuv)$ is an $i^{th}$ power, with $i=0,1,\ldots,4$, of exactly one of six permutations corresponding to the tuples for which $c_5(x, y, z, u, v)$ was defined above. We then define $c_5(x, y, z, u, v)$ to be the same as $c_5$ applied to the corresponding  tuple, e.g. $(0 2 3 1 4)=(01243)^2$ and so $c_5(0,2,3,1,4)=5$.



Finally, for distinct  $x_1, \ldots, x_7 \in C_3$
we define $ c_7(x_1, \ldots, x_7) = c_3(a,b,c)$, where  $\{a, b, c\}=C_3\backslash \{x_1, \ldots, x_7\}$.


We now extend these operations to elements $x_1, \ldots, x_p$ belonging to the same component but not necessarily distinct.

\item[(2)] {\it {Definition of $c_p(x_1, \ldots, x_p)$ with $x_1, \ldots, x_p$ not all distinct and belonging to the same block, and $p$ any prime number:}}\\
For convenience, we define $c_1(x)=x$ for all $x\in V$.

\begin{claim}\label{4elements}
Let $p$ be any prime number, and $x_1, \ldots, x_p$ be elements from the same block of $K$.
 If  $|\{x_1, \ldots, x_p\}|\ge 5$ then there exists $k=1, \ldots, p$ such that at most 4 elements of $\{x_1, \ldots, x_p\}$  appear exactly $k$ times in $x_1, \ldots, x_p$.
    \end{claim}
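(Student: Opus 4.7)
The plan is to exploit two elementary counting identities. For each integer $k\ge 1$, let $m_k$ denote the number of distinct values in $\{x_1,\ldots,x_p\}$ that occur in the tuple $(x_1,\ldots,x_p)$ exactly $k$ times. Counting distinct elements gives $\sum_{k=1}^{p} m_k = |\{x_1,\ldots,x_p\}|$, and counting tuple positions weighted by multiplicity gives $\sum_{k=1}^{p} k\,m_k = p$.

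I would then argue by contradiction: suppose $m_k \ge 5$ for every $k\in\{1,\ldots,p\}$. Then
\[
p \;=\; \sum_{k=1}^{p} k\, m_k \;\ge\; 5\sum_{k=1}^{p} k \;=\; \frac{5\,p(p+1)}{2},
\]
which is impossible for any $p\ge 1$. Hence some $m_k\le 4$, which is exactly the claim.

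The argument is essentially a one-line pigeonhole bound, so there is no significant obstacle to overcome. Note that the hypothesis $|\{x_1,\ldots,x_p\}|\ge 5$ is not actually needed for this counting step; it flags the regime relevant to the subsequent application of the claim in extending $c_p$ to non-distinct $p$-tuples. In that application, one really wants to pick $k$ with $1\le m_k\le 4$ so as to isolate a nonempty, size-at-most-$4$ multiplicity class; this stronger refinement is not part of the claim but can be obtained by additionally using the block-size bound $|C_i|\le 10$ and the primality of $p$ (the only way the strengthening could fail would force all entries distinct, which is handled by part~(1) of the construction).
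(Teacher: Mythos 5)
Your main argument is correct for the claim as literally written, but that literal reading is vacuous: taking $k=p$ always works, since at most one element can occupy all $p$ positions, so $m_p\le 1\le 4$ unconditionally. The statement the paper actually needs — and the one its proof establishes — is the strengthening you relegate to your final parenthetical: there is a $k$ whose multiplicity class is \emph{nonempty} and has at most $4$ elements. This is essential for the application, where one must produce genuine elements $y_1,\ldots,y_j$ with $1\le j\le 4$ in order to set $c_p(x_1,\ldots,x_p)=c_j(y_1,\ldots,y_j)$; a class with $m_k=0$ is useless there. Your one-line bound $p=\sum_k k\,m_k\ge 5\sum_k k$ cannot see this distinction, so as a proof of the intended claim it has a gap.

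The good news is that your sketch of the strengthening is essentially the paper's proof, so you should promote it from a remark to the argument itself. Writing $N_i$ for the set of elements appearing exactly $i$ times: $|N_p|=0$ because $|\{x_1,\ldots,x_p\}|\ge 5$, and $|N_1|<p$ in the regime where the claim is applied (the entries are not all distinct in part (2)); since $p=\sum_i i|N_i|$ is prime, at least two of the $N_i$ are nonempty. If the smallest-index and largest-index nonempty classes $N_{j_1},N_{j_2}$ both had at least $5$ elements, the tuple would involve at least $10$ distinct entries, forcing the block to be $C_3$, forcing $|N_{j_1}|=|N_{j_2}|=5$ with all other $N_i$ empty, and hence $p=5(j_1+j_2)$, contradicting primality. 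So one of these two nonempty classes has at most $4$ elements. Note that this is where the block sizes and the primality of $p$ genuinely enter, whereas your headline inequality uses neither.
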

  \begin{proof}
  For all $i=1, \ldots, p$,
   let $N_i$ be the (possibly empty) set of elements that appear exactly $i$ times in  $x_1, \ldots, x_p$.
   Note that $|N_1|< p$, since $x_1, \ldots, x_p$ are not all distinct, and $|N_p|=0$ because $|\{x_1, \ldots, x_p\}|\ge 5$.
  We have $p=\sum_{i=1}^{p}i|N_i|$, which implies that there are at least two $i$'s for which $N_i$ is non-empty, for $p$ is prime. Let $j_1$ and $j_2$ be the smallest and largest $i$, respectively,  for which   $N_i$ is non-empty. We show that at least one of  $N_{j_1}$,    $N_{j_2}$ has at most 4 elements.   Suppose, for a contradiction, that $|N_{j_1}|\ge 5$ and  $|N_{j_2}|\ge 5$. Then the $x_i$'s must all come from $C_3$. The set $N_{j_1}\cup N_{j_2}$ contains 10 different elements, i.e. all elements of $C_3$. It follows all the other sets $N_i$ are empty,  and $p=5j_1+5j_2$, a contradiction.
  \end{proof}
We then define $c_p(x_1, \ldots, x_p)=c_j(y_1, \ldots, y_j)$, where $j\le 4$, and either $\{x_1, \ldots, x_p\}=\{y_1, \ldots,  y_j\}$  or, when $|\{x_1, \ldots, x_p\}|\ge 5$,  $y_1, \ldots, y_j$ are the (at most 4)  elements repeated exactly $k$ times mentioned in Claim~\ref{4elements} (and $k$ is the smallest such value). Note that the elements $y_1, \ldots, y_j$ are all 
distinct and come from the same block, so $c_j(y_1,\ldots ,y_j)$ is already 
defined.
Indeed,  $c_j(y_1, \ldots, y_j)$ is defined to be symmetric in (1) for $j\le 3$. Also by (1) and using (the proof of) Lemma~\ref{s4} we know that there exists a symmetric operation $s_4$ defined on elements $x_1, \ldots, x_4$ all distinct and belonging to the same block.
It follows that $c_p$ acts on $x_1, \ldots, x_p$ as a symmetric operation.




 \end{enumerate}

 We now show that these partial  operations preserve the relations $R$ and $S$.
\begin{claim}
Let $x_1, \ldots, x_p\in K$ be elements from the same partition block.
If $(x_1,y_1), \ldots, (x_p,y_p)\in R$ (respectively $\in S$) then $(c_p(x_1,\ldots, x_p),c_p(y_1,\ldots, y_p))\in R$ (respectively $\in S$).
\end{claim}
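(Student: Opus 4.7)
The plan is to exploit two structural observations. First, since $R$ and $S$ are graphs of the permutations $r$ and $s$, and both $r, s$ stabilize each of the blocks $C_1, C_2, C_3$ setwise (immediate from their cycle decompositions), the claim is equivalent to showing that for all $x_1, \ldots, x_p$ from a common block,
\[c_p(r(x_1), \ldots, r(x_p)) = r(c_p(x_1, \ldots, x_p))\]
and similarly with $s$ in place of $r$; that is, $c_p$ commutes with both $r$ and $s$ on same-block tuples.

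Second, I would reduce to the case of pairwise distinct arguments. When the $x_i$ are not all distinct, the definition sets $c_p(x_1, \ldots, x_p) = c_j(y_1, \ldots, y_j)$ with $j \le 4$, where $y_1, \ldots, y_j$ are either the distinct elements of $\{x_1, \ldots, x_p\}$ (if that set has at most four elements) or, by Claim~\ref{4elements}, the elements occurring exactly $k$ times for the smallest such $k$. Since $r$ is a bijection of $K$ preserving blocks, coordinatewise application of $r$ preserves the multiplicity profile of $(x_1, \ldots, x_p)$ and bijects its distinguished elements onto those of $(r(x_1), \ldots, r(x_p))$. Hence $c_p(r(x_1), \ldots, r(x_p)) = c_j(r(y_1), \ldots, r(y_j))$ (and likewise for $s$), so the claim reduces to the same statement for $c_j$, $j \le 7$, on distinct arguments.

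The remaining distinct case is a finite verification, organized by the common block of the arguments. The key conceptual point is that the structure $\kest$ was constructed precisely so that the actions of $r$ and $s$ on $C_3$ coincide with the actions they induce on unordered pairs from $C_1$, and the actions on $C_2$ likewise mirror chosen identifications of $C_2$ with certain edge-configurations on $C_1$ given in Figs.~\ref{c2onC2}~and~\ref{c3onC2}. Hence the rules ``$c_2(x,y)=\overline{xy}$'', ``$c_3$ sends a triple of $C_3$-elements to an element of $C_3$ (or $C_1$) determined solely by the graph-isomorphism type of the corresponding triple of edges on $C_1$'', and analogous rules for $c_5, c_7$, commute with any block-preserving permutation of $K$ whose restriction to $C_1$ lifts coherently to $C_2$ and $C_3$; $r$ and $s$ are by construction exactly such permutations. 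I would check each of the finitely many defining cases in turn, using the already-displayed sample verifications in part~(1) of the construction as a template.

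The main obstacle is the verification for $c_5$ on $C_1$, because there $c_5$ is only cyclic (not fully symmetric) and was defined via cycle-type representatives of $5$-cycles in $\operatorname{Sym}(C_1)$. Here I would argue that conjugating any chosen representative $5$-cycle by $r$ or by $s$ yields a rotational power of another representative, with a corresponding value in $C_2$ that matches the action of $r$ or $s$ on $C_2$ as displayed in Fig.~\ref{c2onC2}. This is a routine but moderately lengthy check over the $24$ cyclic classes of $5$-cycles of $\operatorname{Sym}(C_1)$, and is where the computer search behind the construction of $\kest$ really earns its keep.
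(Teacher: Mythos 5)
Your proposal is correct and follows essentially the same route as the paper: since $R$ and $S$ are the graphs of the block-preserving permutations $r$ and $s$, preservation is exactly commutation with $r$ and $s$; the non-distinct case reduces to the distinct one because applying $r$ (or $s$) coordinatewise preserves the multiplicity pattern and hence the selection of the elements $y_1,\ldots,y_j$ in part (2); and the distinct case is a finite direct verification. Your explicit observation that $r$ and $s$ act on $C_3$ as the induced action on unordered pairs from $C_1$, and your isolation of the $c_5$-on-$C_1$ case as the delicate one, merely make precise the checks the paper dispatches with ``it is not hard to check directly.''
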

\begin{proof}
First note that $y_1, \ldots, y_p$ also belong to the same block.
If $x_1, \ldots, x_p$ are all distinct, then  $c_p(x_1, \ldots, x_p)$ was defined in (1), and
it is not hard to check directly that these partial operations preserve $R$ and $S$.
Note that whenever we have a pattern in the repetition of elements in $x_1, \ldots, x_p$,  this pattern is the same
in $y_1, \ldots, y_p$.
  For example  if $x_1=x_2$ and $x_3, \ldots, x_p$ are all distinct  then $y_1=y_2$ and  $y_3, \ldots, y_p$ are  all distinct.
This immediately implies that the partial operations defined in (2) preserve $R$ and $S$, as a consequence of the partial operations defined in (1) also preserving them.
\end{proof}

We now extend the operations define above to elements not belonging to the same block, at the same time as, by induction on $n$,  defining cyclic operations, $c_n'$, of arity $n$  for all $n< 21$, that preserve the relations in $\kest$.
For $n=2$  we define an idempotent cyclic operation $c_2'$ as follows
$$c_2'(x, y)=\left\{\begin{array}{cl}
x& {\rm if}  \ x\in C_i, y\in C_j , \ i< j\\
y& {\rm if}  \ x\in C_i, y\in C_j , \ i> j\\
c_2(x, y) & {\rm  if} \ x, y \in C_i,
\end{array}\right.$$
with $i,j=1, 2, 3$ and $c_2$ as defined in (1) and (2). It is easy to check that $c_2'$ preserves both $R$ and $S$.

Now, assume that $R$ and $S$ are preserved by cyclic operations, $c_n'$ for all $n<k$. If $k$ is not prime, then $k=mq$ and we know, as in \cite{bartocyclic},  that $c_k'$ can be obtained by composing $c_m'$ and $c_q'$ as follows
$$c_k'(x_1, \ldots, x_k)=c_m'(c_q'(x_1,\ldots, x_q), \ldots, c_q'(x_{k-q+1}, \ldots, x_k)).$$
By the inductive hypothesis,  $c_q'$ and $c_m'$ preserve $R$ and $S$, so  $c_k'$ also preserves these relations.
If $k$ is prime we define
$$c_k'(x_1, \ldots, x_k)=\left\{\begin{array}{cl}
c_k(x_1, \ldots, x_k) & {\rm if}  \ x_1, \ldots , x_k \in C_i,\ \ \  \ (i=1,2,3),\\
c_m'(x_1, \ldots, x_m) & {\rm  if} \ \{x_1, \ldots, x_m\}=  C_1\cap \{x_1, \ldots, x_k\}\ne \emptyset,\\
c_m'(x_1, \ldots, x_m) & {\rm  if} \  \{x_1, \ldots, x_m\}=  C_2\cap \{x_1, \ldots, x_k\} \ {\rm and} \\ & \ \ \ \ C_1\cap \{x_1, \ldots, x_k\}=\emptyset
\end{array}\right.$$
that is:  if all elements $x_1, \ldots, x_k$ belong to the same block then we already know from (1) and (2) that there is a cyclic (partial) operation, $c_k$, defined on them that  preserves $R$ and $S$; if not all elements belong to the same block then we choose the elements in $x_1, \ldots, x_k$ that belong to $C_1$ (or $C_2$ if no element belongs to $C_1$) and apply to them the corresponding operation of smaller arity, which we know exists by the inductive hypothesis. Since the blocks are disjoint and have no arcs connecting them, $c_k'$ clearly preserves both $R$ and $S$. Theorem~\ref{cic} is proved.

\section{Conclusion}\label{K}

We have described the forbidden structures for the existence of symmetric term operations in a finite algebra. We have also shown that the classes of finite algebras
having cyclic operations of all arities and symmetric operations of all arities are not the same. In fact, the algebra
$\algA_{\kest}$ that separates these classes can easily be shown to generate an arithmetical variety.

It is an interesting open question whether Theorem~\ref{2cycles} can be strengthened by requiring the algebra
$\algB$ in $\var(\algA)$ (that has two automorphisms without a common fixed point) to belong to $HS(\algA)$.
This strengthening could help in the study of complexity (more specifically, robust algorithms) for constraint
satisfaction problems~\cite{dalmau:robust}. Even obtaining an upper bound on the number $n$ such that $\algB$ can be found in
$HS(\algA^n)$ would be interesting, since such a bound would imply decidability of the existence of symmetric term operations in a finite algebra
(and hence decidability of the problem of recognising CSPs solvable by linear programming), which is currently an open question.


\begin{thebibliography}{10}

\bibitem{bartocyclic}
L. Barto, M. Kozik, M. Mar{\'o}ti and T. Niven.
\newblock{  Congruence modularity implies cyclic terms for finite algebras},
 \newblock{ {\em Algebra Universalis}, 61 (3),  365--380, 2009.}


 \bibitem{bartoabsorbing}
 L. Barto and M. Kozik
 \newblock{Absorbing subalgebras, cyclic terms and the constraint satisfaction problem}
 \newblock{{\em Logical Methods in Computer Science}  8 (1:07), 1--26, 2012.}

 \bibitem{barto:robust}
 L. Barto and M. Kozik
 \newblock{Robust satisfiability of constraint satisfaction problems}
 \newblock{{\em STOC'12}, 931--940, 2012.}


\bibitem{barto:bw}
 L. Barto and M. Kozik
 \newblock{Constraint satisfaction problem solvable by local consistency methods}
 \newblock{{\em Journal of the ACM}, 61 (1), Article 3, 2014.}


 \bibitem{bulatovcounting}
A. Bulatov and V. Dalmau.
\newblock{Towards a Dichotomy Theorem for Counting CSP.}
\newblock{{\em Information and Computation} 205(5): 651--678 (2007).}


\bibitem{surveydualities}
A. Bulatov, A. Krokhin and B. Larose.
\newblock{Dualities for constraint satisfaction problems},
\newblock{In: {\em  Complexity of Constraints}, LNCS 5250, 93--124, 2008.}

 \bibitem{bulatovjeavonskrokhin}
 A.~Bulatov, P.~Jeavons and A.~Krokhin,
\newblock{ Classifying the complexity of constraints using finite algebras,}
\newblock{{\em SIAM J. Comput.}  34 (3), 720--742, 2005.}

 \bibitem{bulatovmarx}
 A.~Bulatov and D.~Marx,
\newblock{The complexity of global cardinality constraints},
\newblock{{\em Logical Methods in Computer Science}, 6(4), 2010}.


\bibitem{bulatovvaleriote}
A.~Bulatov and M.~Valeriote.
\newblock{ Recent results on the algebraic approach to the CSP},
\newblock{In:{\em Complexity of Constraints},  68--92, 2008.}

\bibitem{burrissanka}
S. Burris and H.P. Sankappanavar.
\newblock{ {\em A Course in Universal Algebra} Springer-Verlag, Berlin-New York, 1981.}


 \bibitem{dalmauwidth1}
 V. Dalmau and J. Pearson.
\newblock{ Set Functions and Width 1},
\newblock{In \emph{CP'99}, vol.1713 of LNCS, 159--173, 1999.}

 \bibitem{dalmau:robust}
 V. Dalmau and A. Krokhin.
\newblock{Robust satisfiability for CSPs: hardness and algorithmic results}
\newblock{{\em ACM Transactions on Computation Theory}, 5 (4), Article 15, 2013.},


\bibitem{federvardi}
T. Feder and  M. Vardi.
\newblock{The computational structure of monotone monadic SNP and constraint satisfaction: A study through Datalog and group theory}
\newblock{ \emph{SIAM Journal on Computing } 28 (1), 57--104, 1998}

\bibitem{hagemannmitschke}
J. Hagemann and A. Mitschke,
\newblock{On $n$-permutable congruences},
\newblock{{\em Algebra Universalis} 3, 8--12, 1973.}

 \bibitem{hobby-mck}
D.~Hobby and R.~McKenzie, {\em The Structure of Finite Algebras,} Contemporary Mathematics Series Vol.\ 76, American
Mathematical Society, Providence, RI, 1991.

 \bibitem{kun}
 G. Kun, R. O'Donnell, S.Tamaki, Y. Yoshida  and Y. Zhou.
 \newblock{Linear programming, width-1 CSPs, and robust satisfaction},
\newblock{{\em Proceedings of the 3rd Innovations in Theoretical Computer Science Conference},
484--495, 2012}


 \bibitem{kun-example}
 G. Kun. and M. Szegedy
 \newblock{A new line of attack on the dichotomy conjecture},
\newblock{ {\em European Journal of Combinatorics} 52 (B) 338--367, 2016}


\bibitem{Larose07:Universal}
B. Larose and P. Tesson.
\newblock{ Universal Algebra and Hardness Results for Constraint Satisfaction Problems},
\newblock{{\em Theoret. Comput. Sci.} 410, 1629-1647, 2009.}

\bibitem{marotimckenzie}
M. Mar\'oti and R. McKenzie.
\newblock{ Existence theorems for weakly symmetric operations},
\newblock{{\em Algebra Universalis} 59 (2008), no. 3-4, 463-489. }





\end{thebibliography}
\end{document}